\newtheorem{theorem}{Theorem}[section]
\newtheorem{lemma}[theorem]{Lemma}
\newtheorem{remark}[theorem]{Remark}
\newtheorem{example}[theorem]{Example}
\title{A characterization of permutability of $2$-uniform tolerances on posets}
\author{Ivan~Chajda and Helmut~L\"anger}
\date{}
\begin{document}

\footnotetext{Support of the research of the first author by the Czech Science Foundation (GA\v CR), project 24-14386L, entitled ``Representation of algebraic semantics for substructural logics'', and by IGA, project P\v rF~2024~011, and of the second author by the Austrian Science Fund (FWF), project I~4579-N, entitled ``The many facets of orthomodularity'', is gratefully acknowledged.}

\maketitle

\begin{abstract}
Tolerance relations were investigated by several authors in various algebraic structures, see e.g.\ the monograph \cite C. Recently G.~Cz\'edli \cite{C22} studied so-called $2$-uniform tolerances on lattices, i.e.\ tolerances that are compatible with the lattice operations and whose blocks are of cardinality $2$. He showed that two such tolerances on a lattice containing no infinite chain permute if and only if they are amicable (a concept introduced in his paper). We extend this study to tolerances on posets. Since in posets we have no lattice operations, we must modify the notion of amicability. We modified it in such a way that in case of lattices it coincides with the original definition. With this new definition we can prove that two tolerances on a poset containing no infinite chain permute if and only if they are amicable in the new sense.
\end{abstract}

{\bf AMS Subject Classification:} 08A02, 08A05, 06A06, 06A11

{\bf Keywords:} Tolerance relation, poset, permuting tolerances, $2$-uniform tolerance, amicable $2$-uniform tolerance

\section{Introduction}

By a {\em tolerance} on an algebra $\mathbf A=(A,F)$ is meant a reflexive and symmetric binary relation on $A$ having the Substitution Property with respect to all operations of $F$. For the theory of tolerances see e.g.\ the monograph \cite C. Important are in particular tolerances on lattices since G.~Cz\'edli showed in \cite{C82} that every lattice $\mathbf L=(L,\vee,\wedge)$ can be factorized in a natural way by any tolerance $T$ on $\mathbf L$, i.e.\ the set $L/T$ of all blocks of $T$ forms a lattice again. Such a situation is rather exceptional and does not hold for other types of algebras in general, but many varieties whose members have this property were described in \cite{CCH}.

Recall that a {\em block} of a tolerance $T$ on an algebra $(A,F)$ is a maximal subset $B$ of $A$ satisfying $B^2\subseteq T$.

Recently G.~Cz\'edli proved in \cite{C22} that so-called $2$-uniform tolerances on a lattice containing no infinite chain permute if and only if they are amicable.

The concept of a tolerance on a lattice was generalized to posets by the present authors \cite{CL}. Despite the fact that we do not have explicit operations on a poset, the concept was defined in a way that the blocks are convex again and that posets can be factorized by these tolerances. For other interesting properties of tolerances on posets the reader is referred to \cite{CL}. Hence the natural question arises if also the result of G.~Cz\'edli concerning the permutability of $2$-uniform tolerances can be extended to posets. This is the topic of the present paper.

\section{Basic concepts}

At first we recall several concepts introduced in \cite{CL} and \cite{C22}.

Let $\mathbf P=(P,\leq)$ be a poset. A {\em tolerance} on $\mathbf P$ is a reflexive and symmetric binary relation $T$ on $P$ satisfying the following conditions:
\begin{enumerate}[(1)]
	\item If $(x,y),(z,u)\in T$ and $x\vee z$ and $y\vee u$ exist then $(x\vee z,y\vee u)\in T$.
	\item If $(x,y),(z,u)\in T$ and $x\wedge z$ and $y\wedge u$ exist then $(x\wedge z,y\wedge u)\in T$.
	\item If $x,y,z\in P$ and $(x,y),(y,z)\in T\neq P^2$ then there exist $u,v\in P$ with $u\leq x,y,z\leq v$ and $(u,y),(y,v)\in T$.
	\item If $(x,y)\in T\neq P^2$ then there exists some $(z,u)\in T$ with both $z\leq x,y\leq u$ and $(v,z),(v,u)\in T$ for all $v\in P$ with $(v,x),(v,y)\in T$.
\end{enumerate}
Conditions (3) and (4) are quite natural since they are satisfied by every tolerance on a lattice. In condition (3) one can take $u:=x\wedge y\wedge z$ and $v:=x\vee y\vee z$, and in condition (4) one can take $z:=x\wedge y$ and $u:=x\vee y$.

A {\em block} of a tolerance $T$ on $\mathbf P$ is a maximal subset $B$ of $P$ satisfying $B^2\subseteq T$. Let $P/T$ denote the set of all blocks of $T$. Clearly, $T=\bigcup\limits_{B\in P/T}B^2$. In \cite{CL} we proved that every block of $T$ is convex. According to Zorn's Lemma every subset $A$ of $P$ satisfying $A^2\subseteq T$ is contained in a block of $T$. Following \cite{C22} a tolerance is called {\em $2$-uniform} if every of its blocks consists of exactly two elements. Let $a,b\in P$. We call $a$ a {\em lower $T$-neighbor of $b$} and $b$ an {\em upper $T$-neighbor of $a$} if $a\prec b$ and $(a,b)\in T$. 
 
\section{A characterization of permutability of $2$-uniform tolerances on posets}

It is evident that $2$-uniform tolerances on a poset are very specific. Their basic properties are as follows.

\begin{lemma}\label{lem1}
	Let $\mathbf P=(P,\le)$ be a poset, $a,b\in P$ and $T$ be a $2$-uniform tolerance on $\mathbf P$. Then the following holds:
	\begin{enumerate}[{\rm(i)}]
		\item The element $a$ has at most one lower $T$-neighbor and at most one upper $T$-neighbor,
		\item if $(a,b)\in T$ then $a=b$ or $a\prec b$ or $b\prec a$.
	\end{enumerate}
\end{lemma}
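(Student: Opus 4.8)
The plan is to extract everything from two facts: that $T$ is $2$-uniform and that, by \cite{CL}, every block of $T$ is convex. The bridge between them is an observation I would record first: if $(x,y)\in T$ with $x\neq y$, then $\{x,y\}^2\subseteq T$, so by Zorn's Lemma $\{x,y\}$ lies in a block $B$; since $|B|=2$ and already $|\{x,y\}|=2$, necessarily $B=\{x,y\}$. Hence every pair of distinct $T$-related elements is itself a convex block. I would set aside the degenerate case $T=P^2$ (which forces $P$ to be a single block, so $|P|\le2$) and assume $T\neq P^2$ thereafter, since this is exactly what makes conditions (3) and (4) available.

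For (ii), suppose $(a,b)\in T$ and $a\neq b$; by the observation $\{a,b\}$ is a convex block. If $a$ and $b$ are comparable, say $a<b$, then convexity rules out any $c$ with $a<c<b$ (such a $c$ would have to lie in $\{a,b\}$), whence $a\prec b$; the case $b<a$ is symmetric. To exclude incomparability I would invoke condition (4): it yields $(z,u)\in T$ with $z\le a,b\le u$. Here $z\neq u$ (else $a=z=u=b$), so $\{z,u\}$ is a convex block with $z\le u$, and from $z\le a\le u$ and $z\le b\le u$ convexity forces $a,b\in\{z,u\}$, i.e.\ $\{a,b\}=\{z,u\}$. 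In particular $a$ and $b$ are then comparable, so the incomparable case cannot occur and (ii) follows.

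For (i), suppose $a$ had two distinct lower $T$-neighbors $c\neq d$, so $c\prec a$, $d\prec a$ and $(c,a),(d,a)\in T$. Applying condition (3) to the chain $(c,a),(a,d)\in T$ produces $u,v\in P$ with $u\le c,a,d\le v$ and $(u,a),(a,v)\in T$. From $u\le a$ and $(u,a)\in T$ I get $u\neq a$ (since $u=a$ would give $a\le c\prec a$), so $\{u,a\}$ is a convex block; then $u\le c\le a$ forces $c\in\{u,a\}$, and as $c\neq a$ I conclude $c=u$, and likewise $d=u$, contradicting $c\neq d$. The statement for upper $T$-neighbors is dual: there I would use the element $v$ together with $(a,v)\in T$ in exactly the same way.

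The routine part is the covering deduction in (ii) and the final collapse in (i), both immediate from convexity once the right elements are in hand. The genuine content — and the step I expect to be the main obstacle — is recognizing which of the poset-specific axioms supplies those elements: condition (4) is precisely what forbids an incomparable block in (ii), while condition (3) is precisely what sandwiches two neighbors of $a$ between a common pair in (i). Checking that these sandwiched elements are forced to coincide with the original ones is where $2$-uniformity (every block has size exactly $2$) does the real work.
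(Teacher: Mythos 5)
Your proposal is correct and takes essentially the same route as the paper's proof: condition (3) together with convexity and $2$-uniformity of blocks for (i), and condition (4) with the same ingredients for (ii), the only (cosmetic) difference being that you pin down the block containing a distinct $T$-related pair as exactly that pair, whereas the paper picks a block containing the relevant elements and contradicts $|B|=2$. Even your dismissal of the degenerate case $T=P^2$ parallels the paper's ``without loss of generality, assume $|P|>2$''.
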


\begin{proof}
	Without loss of generality, assume $|P|>2$. Then $T\ne P^2$.
	\begin{enumerate}[(i)]
		\item Assume $a$ to have two distinct lower $T$-neighbors $c$ and $d$. Then $(c,a),(a,d)\in T$. According to (3) there exists some $e\in P$ with $e\le c,a,d$ and $(e,a)\in T$. Let $B\in P/T$ with $e,a\in B$. Since $e\le c,d\le a$ and $B$ is convex we conclude $c,d,a\in B$ contradicting $|B|=2$. This shows that $a$ can have at most one lower $T$-neighbor. The second assertion follows by duality.
		\item Assume $(a,b)\in T$. First suppose $a\parallel b$. According to (4) there exist $c,d\in P$ with $(c,d)\in T$ and $c\le a,b\le d$. Let $B\in P/T$ with $c,d\in B$. Since $B$ is convex we conclude $c,a,b\in B$. Now we have $a\parallel b$ and $c\le a,b$ and hence $c\ne a,b$ contradicting $|B|=2$. This shows $a\le b$ or $b\le a$. Let $C\in P/T$ with $a,b\in C$. Since $C$ is convex we obtain $[a,b]\subseteq C$ if $a\le b$ and $[b,a]\subseteq C$ if $b\le a$. Now $|C|=2$ implies $a=b$ or $a\prec b$ or $b\prec a$.
	\end{enumerate}
\end{proof}

Let $\mathbf P$ be a poset, $a,b\in P$ and $T,S$ $2$-uniform tolerances on $\mathbf P$.

In what follows we adopt some key concepts from \cite{C22} for $2$-uniform tolerances on posets.

The element $a$ is called a {\em split $(T,S)$-bottom} if there exists some upper $T$-neighbor $b$ of $a$ and some upper $S$-neighbor of $a$ being different from $b$. Further, $a$ is called an {\em adherent $(T,S)$-bottom} if there exists some common upper $T$-neighbor and $S$-neighbor of $a$. Finally, $a$ is called a {\em $(T,S)$-bottom} if it is either a split $(T,S)$-bottom or an adherent $(T,S)$-bottom. The notions {\em split $(T,S)$-top}, {\em adherent $(T,S)$-top} and {\em $(T,S)$-top} are defined dually. A {\em $T$-top} ({\em $T$-bottom}) is an upper (a lower) $T$-neighbor of some element of $P$.

The tolerances $T$ and $S$ are called {\em amicable} if the following four conditions hold:
\begin{enumerate}
	\item[(5)] If $a\ne b$ and there exists some lower $T$-neighbor of $a$ being a lower $S$-neighbor of $b$ then there exists some upper $S$-neighbor of $a$ being an upper $T$-neighbor of $b$.
	\item[(6)] If $a\ne b$ and there exists some upper $T$-neighbor of $a$ being an upper $S$-neighbor of $b$ then there exists some lower $S$-neighbor of $a$ being a lower $T$-neighbor of $b$.
	\item[(7)] if $a$ is a $(T,S)$-top and $b$ is either an upper $T$-neighbor or an upper $S$-neighbor of $a$ then $b$ is a $(T,S)$-top, too,
	\item[(8)] if $a$ is a $(T,S)$-bottom and $b$ is either a lower $T$-neighbor or a lower $S$-neighbor of $a$ then $b$ is a $(T,S)$-bottom, too.
\end{enumerate}
Let us note that any two $2$-uniform tolerances on a lattice $(L,\vee,\wedge)$ satisfy (5) and (6). Namely, G.~Cz\'edli \cite{C22} showed that if $a\ne b$ and there exists some lower $T$-neighbor of $a$ being a lower $S$-neighbor of $b$ then $a\vee b$ is an upper $S$-neighbor of $a$ being an upper $T$-neighbor of $b$, and if $a\ne b$ and there exists some upper $T$-neighbor of $a$ being an upper $S$-neighbor of $b$ then $a\wedge b$ is a lower $S$-neighbor of $a$ being a lower $T$-neighbor of $b$.

\begin{remark}\label{rem1}
	Note that {\rm(7)} and {\rm(8)} coincide with conditions {\rm(A1)} and {\rm(A2)} from {\rm\cite{C22}}, respectively. Hence, for lattices the concept of amicable $2$-uniform tolerances as defined in {\rm\cite{C22}} coincides with the one defined above.
\end{remark}

The proof of our main theorem will broadly use the ideas from \cite{C22}.

\begin{theorem}\label{th1}
	Let $T$ and $S$ be $2$-uniform tolerances on a poset $(P,\le)$ containing no infinite chain. Then $T$ and $S$ permute if and only if they are amicable.
\end{theorem}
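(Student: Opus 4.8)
The plan is to reformulate permutability and then treat the two implications separately, isolating the ``monotone'' configurations as the real difficulty. Writing $T\circ S=\{(x,z):\exists y\ (x,y)\in T,\ (y,z)\in S\}$, permutability means $T\circ S=S\circ T$. Since $T,S$ are symmetric, $(T\circ S)^{-1}=S\circ T$, so permutability is equivalent to $T\circ S$ being symmetric; moreover the four amicability conditions are invariant under the simultaneous exchange $T\leftrightarrow S$ (in (5),(6) one also swaps the two distinguished elements). Hence for the ``if'' part it suffices to prove the single inclusion $T\circ S\subseteq S\circ T$, the reverse following by this symmetry. Throughout I would use Lemma~\ref{lem1}: each element has at most one upper and at most one lower $T$-neighbor (and likewise for $S$), and comparable $T$- or $S$-related elements form a covering pair. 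I would also record that, by $2$-uniformity together with Zorn's Lemma, every element lies in a two-element $T$-block and in a two-element $S$-block, hence possesses at least one $T$-neighbor and at least one $S$-neighbor; this is what lets the absence of infinite chains force neighbors to point in a prescribed direction at extremal elements.

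\emph{The ``if'' direction, opposite-direction cases.} Let $(x,z)\in T\circ S$ with mediator $y$, so $(x,y)\in T$ and $(y,z)\in S$. If $x=y$, $y=z$ or $x=z$ the membership $(x,z)\in S\circ T$ is immediate from reflexivity, so assume $y$ is a proper $T$-neighbor of $x$ and a proper $S$-neighbor of $z$. By Lemma~\ref{lem1} each step is a cover, giving four cases by their directions. If $y$ is a common \emph{upper} neighbor ($x\prec y$ via $T$ and $z\prec y$ via $S$), then $y$ is an upper $T$-neighbor of $x$ that is an upper $S$-neighbor of $z$, and (6) produces a common lower $S$-neighbor of $x$ and lower $T$-neighbor of $z$, which is the required mediator; dually, if $y$ is a common \emph{lower} neighbor, (5) does the same job.

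\emph{The ``if'' direction, monotone cases (the crux).} The remaining cases are the chains $x\prec y\prec z$ with $(x,y)\in T$, $(y,z)\in S$, and its dual $z\prec y\prec x$. A direct check using Lemma~\ref{lem1} shows that \emph{any} mediator $w$ with $x\mathrel S w\mathrel T z$ must satisfy $x\prec w\prec z$; equivalently one must exhibit an upper $S$-neighbor of $x$ that is simultaneously a lower $T$-neighbor of $z$. This is exactly where (7), (8) and the no-infinite-chain hypothesis enter. The idea is to turn $y$ into a $(T,S)$-bottom or $(T,S)$-top: if $y$ has an upper $T$-neighbor it is a $(T,S)$-bottom, and (8) forces its lower $T$-neighbor $x$ to be a $(T,S)$-bottom, so $x$ acquires an upper $S$-neighbor; dually, if $y$ has a lower $S$-neighbor, (7) forces $z$ to be a $(T,S)$-top and hence to have a lower $T$-neighbor. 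The delicate point is that $y$ need not have either neighbor a priori; to overcome this I would argue by Noetherian induction supplied by the chain condition, passing to a counterexample with $z$ maximal (or $x$ minimal). At such an extremal element the relevant match is forced inward — a maximal $z$ has no upper cover, so its $T$-neighbor is a \emph{lower} one — which supplies the missing neighbor and lets (7),(8) propagate, after which (5),(6) are used to identify the upper $S$-neighbor of $x$ with the lower $T$-neighbor of $z$. I expect this interplay, first producing the right neighbors and then forcing them to coincide, to be the main obstacle.

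\emph{The ``only if'' direction.} Assume $T\circ S=S\circ T$. Each amicability condition is obtained by feeding a concrete pair through permutability and reading off the outcome via Lemma~\ref{lem1}. For (5), a common lower $T$-neighbor of $a$ and lower $S$-neighbor of $b$ yields $(b,a)\in S\circ T$, hence $(b,a)\in T\circ S$; the covering constraints leave only a common upper neighbor as an admissible mediator, which is the asserted upper $S$-neighbor of $a$ and upper $T$-neighbor of $b$, and (6) is dual, starting from $(a,b)\in T\circ S$. For (7), if $a$ is a $(T,S)$-top with lower $S$-neighbor $d$ and $b$ is, say, an upper $T$-neighbor of $a$, then $(b,d)\in T\circ S$, so $(b,d)\in S\circ T$; since $d\prec a\prec b$, the only admissible mediator is a lower $S$-neighbor of $b$, which together with the lower $T$-neighbor $a$ shows $b$ to be a $(T,S)$-top; the case where $b$ is an upper $S$-neighbor of $a$, as well as (8), follow dually.
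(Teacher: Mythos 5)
Your overall skeleton matches the paper's: dispose of the degenerate cases, split the nontrivial ones into the four cover configurations of Lemma~\ref{lem1}(ii), handle the two opposite-direction configurations by (5) and (6), and isolate the monotone chains $x\prec y\prec z$ as the crux; your reduction to a single inclusion via the $T\leftrightarrow S$ symmetry of amicability is also legitimate (the paper uses order duality instead), and your argument for (7) in the ``only if'' part is correct and even a bit cleaner than the paper's. But there are two genuine gaps. First, in the ``only if'' proof of (5) you assert that ``the covering constraints leave only a common upper neighbor as an admissible mediator.'' That is false: Lemma~\ref{lem1} alone does not exclude a mediator $d$ with $d\prec a$ and $d\prec b$. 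Nothing in the covering structure forbids a crown in which $a,b$ both cover $c$ and $d$, with $\{c,a\},\{d,b\}$ two-element $T$-blocks and $\{d,a\},\{c,b\}$ two-element $S$-blocks, and in that configuration (5) fails while all of Lemma~\ref{lem1} holds. What kills the common-lower case is axiom (1) of the definition of a tolerance on a poset: from $(a,c)\in T$ and $(d,b)\in T$, the joins $a\vee d=a$ and $c\vee b=b$ exist, so $(a,b)\in T$, contradicting $a\parallel b$ together with Lemma~\ref{lem1}(ii). This appeal to the join/meet compatibility conditions (1),(2) is unavoidable (it is the poset substitute for Cz\'edli's use of $a\vee b$ in lattices), and your proof of (5), and dually (6), is incomplete without it.

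Second, in the ``if'' direction the monotone case --- which is the actual content of the theorem --- is only a plan, and the plan contains a false step. You pass to a counterexample with $z$ maximal and claim ``a maximal $z$ has no upper cover, so its $T$-neighbor is a lower one''; but $z$ maximal \emph{among counterexamples} need not be maximal in $P$, so it may perfectly well have upper covers and an upper $T$-neighbor. A workable Noetherian induction has to say: if $z$ has an upper $T$-neighbor $u$, apply the induction hypothesis (for the swapped pair $(S,T)$, using your symmetry reduction) to the longer chain $y\prec z\prec u$; its mediator is an upper $T$-neighbor of $y$, which makes $y$, and then by (8) also $x$, a $(T,S)$-bottom. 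If $z$ has no upper $T$-neighbor, $2$-uniformity forces a lower $T$-neighbor $d$ of $z$, and one needs a split/adherent case distinction: in the split case $d\ne y$, condition (6) plus the uniqueness in Lemma~\ref{lem1}(i) shows $d$ is an upper $S$-neighbor of $x$, so $d$ itself is the mediator; in the adherent case one descends via (8). Finally, the endgame you defer --- identifying the upper $S$-neighbor of $x$ with a lower $T$-neighbor of $z$ once $x$ is a $(T,S)$-bottom --- again needs a split/adherent analysis using (5), (7), (6) and Lemma~\ref{lem1}(i), which is exactly the paper's alternating-chain argument $a_0\prec a_1\prec\cdots\prec a_n$ with descent by (8). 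Since you explicitly leave this ``interplay'' as an expected obstacle rather than resolving it, the core of the ``if'' direction remains unproven.
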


\begin{proof}
	Let $a,b\in P$. First assume $T$ and $S$ to permute.
	\begin{enumerate}
		\item[(5)] Suppose $a\ne b$ and suppose $c$ to be some lower $T$-neighbor of $a$ being a lower $S$-neighbor of $b$. Because of (ii) of Lemma~\ref{lem1} we have $a\parallel b$ and hence $(a,b)\notin S\cup T$ again according to (ii) of Lemma~\ref{lem1}. Now $(a,b)\in T\circ S=S\circ T$. Hence there exists some $S$-neighbor $d$ of $a$ being a $T$-neighbor of $b$. Because of $a\parallel b$ we have $d<a,b$ or $d>a,b$. Now $d<a,b$ would imply $(a,b)=(a\vee d,c\vee b)\in T$ according to (1) since $(a,c),(d,b)\in T$. But this contradicts (ii) of Lemma~\ref{lem1}. Therefore $d$ is an upper $S$-neighbor of $a$ being an upper $T$-neighbor of $b$ proving (5).
		\item[(6)] follows from (5) by duality.
		\item[(7)] Suppose $b$ to be either an upper $T$-neighbor or $S$-neighbor of $a$. Without loss of generality assume $(a,b)\in T$. First suppose $a$ to be a split $(T,S)$-top. Let $c$ and $d$ denote the lower $T$-neighbor and $S$-neighbor of $a$, respectively. Then $c\parallel d$. Since $(d,b)\in S\circ T=T\circ S$ there exists some $e\in P$ with $(d,e)\in T$ and $(e,b)\in S$. Now $b\le e$ would imply $d<a<b\le e$ which together with $(d,e)\in T$ would contradict (ii) of Lemma~\ref{lem1}. Again according to Lemma~\ref{lem1} we have $e\prec b$. Now $a$ and $e$ are a lower $T$-neighbor and $S$-neighbor of $b$, respectively, showing $b$ to be a $(T,S)$-top. Next assume $a$ to be an adherent $(T,S)$-top. Let $c$ denote the common lower $T$-neighbor and $S$-neighbor of $a$. Since $(c,b)\in S\circ T=T\circ S$ there exists some $d\in P$ with $(c,d)\in T$ and $(d,b)\in S$. Now $d\le c$ would imply $d\le c<a<b$ which together with $(d,b)\in S$ would contradict (ii) of Lemma~\ref{lem1}, and $b\le d$ would imply $c<a<b\le d$ which together with $(c,d)\in T$ would again contradict (ii) of Lemma~\ref{lem1}. Thus a further application of (ii) of Lemma~\ref{lem1} yields $c\prec d\prec b$. Now $a$ and $d$ are a lower $T$-neighbor and $S$-neighbor of $b$, respectively, showing $b$ to be a $(T,S)$-top.
		\item[(8)] follows from (7) by duality.
	\end{enumerate}
Hence $T$ and $S$ are amicable. \\
Conversely, assume $T$ and $S$ to be amicable. Suppose $(a,b)\in T\circ S$. Then there exists some $c\in P$ with $(a,c)\in T$ and $(c,b)\in S$. We want to show $(a,b)\in S\circ T$. If $a=b$ then $(a,b)=(a,a)\in S\circ T$. If $c=a$ then $(a,b)=(c,b)\in S$ and $(b,b)\in T$ and hence $(a,b)\in S\circ T$. If $c=b$ then $(a,a)\in S$ and $(a,b)=(a,c)\in T$ and hence $(a,b)\in S\circ T$. So we can assume that $a,b,c$ are mutually distinct. According to (ii) of Lemma~\ref{lem1} we consider the following four cases. \\
$\underline{c\prec a\text{ and }c\prec b.}$ \\
Because of (5) there exists a common upper $S$-neighbor $d$ of $a$ and $T$-neighbor of $b$ and hence $(a,d)\in S$ and $(d,b)\in T$ showing $(a,b)\in S\circ T$. \\
$\underline{a\prec c\text{ and }b\prec c.}$ \\
This case is dual to the previous one. \\
$\underline{a\prec c\prec b.}$ \\
Put $a_0:=a$, $a_1:=c$ and $a_2:=b$. For $i\ge3$ define $a_i$ as follows: If $i$ is odd and $a_{i-1}$ is a $T$-bottom then define $a_i$ to be the unique upper $T$-neighbor of $a_{i-1}$. If $i$ is even and $a_{i-1}$ is an $S$-bottom then define $a_i$ to be the unique upper $S$-neighbor of $a_{i-1}$. Note that $a_0$ is a $T$-bottom, $a_1$ is the unique upper $T$-neighbor of $a_0$, $a_1$ is an $S$-bottom and $a_2$ is the unique upper $S$-neighbor of of $a_1$. Since $a_2\prec a_3\prec a_4\prec\cdots$, but $\mathbf P$ has no infinite chain, there exists some $n\ge2$ such that $a_2,\ldots,a_n$ are defined, but $a_{n+1}$ is not. First assume $n$ to be even. Then $a_n$ is not a $T$-bottom. Since every element of $P$ belongs to at least one $2$-element block of $T$, $a_n$ is a $T$-top. But it is also an $S$-top and therefore a $(T,S)$-top. If $n$ is odd then a similar reasoning shows that $a_n$ is a $(T,S)$-top. So $a_n$ is a $(T,S)$-top in any case. Now assume $a_n$ to be an adherent $(T,S)$-top. Then, according to (i) of Lemma~\ref{lem1}, $a_{n-1}$ is an adherent $(T,S)$-bottom. Now (8) shows that $a_{n-2}$ is a $(T,S)$-bottom, too. Next assume $a_n$ to be a split $(T,S)$-top. Suppose $n$ is even. Then $a_{n-1}$ is the unique lower $S$-neighbor of $a_n$, $a_n$ has a unique lower $T$-neighbor $d$ and $a_{n-1}\ne d$. According to (6) there exists some lower $S$-neighbor $e$ of $d$ being a lower $T$-neighbor of $a_{n-1}$. Since $a_{n-2}$ is a lower $T$-neighbor of $a_{n-1}$, too, (i) of Lemma~\ref{lem1} yields $e=a_{n-2}$. Therefore $a_{n-2}$ is a $(T,S)$-bottom. With roles of $T$ and $S$ interchanged, one can prove that $a_{n-2}$ is a $(T,S)$-bottom also in the case when $n$ is odd. Hence in any case $a_{n-2}$ is a $(T,S)$-bottom. Applying (8) finitely many times yields that $a_0=a$ is a $(T,S)$-bottom, too. First assume $a$ to be a split $(T,S)$-bottom. Then there exists some upper $S$-neighbor $f$ of $a$ being distinct from $c$. According to (5) there exists some upper $S$-neighbor $g$ of $c$ being an upper $T$-neighbor of $f$. Since $b$ is an upper $S$-neighbor of $c$, too, (i) of Lemma~\ref{lem1} yields $g=b$. Hence $(a,f)\in S$ and $(f,b)=(f,g)\in T$ showing $(a,b)\in S\circ T$. Now suppose $a$ to be an adherent $(T,S)$-bottom. Then $c$ is a $(T,S)$-top. According to (7), $b$ is a $(T,S)$-top, too. Hence $b$ has a unique lower $T$-neighbor $h$. Assume $c\ne h$. According to (6) there exists some lower $S$-neighbor $i$ of $h$ being a lower $T$-neighbor of $c$. Since $a$ is a lower $T$-neighbor of $c$, too, (i) of Lemma~\ref{lem1} yields $i=a$. On the other hand, $i=a$ is a lower $S$-neighbor of $h$. Hence $c$ and $h$ are two distinct upper $S$-neighbors of $a$ contradicting (i) of Lemma~\ref{lem1}. This shows $c=h$. Now $(a,c)\in S$ and $(c,b)=(h,b)\in T$ proving $(a,b)\in S\circ T$. \\
$\underline{b\prec c\prec a.}$ \\
This case is dual to the previous one. \\
Hence in any case we have $(a,b)\in S\circ T$. This shows $T\circ S\subseteq S\circ T$. By duality, we obtain $S\circ T\subseteq T\circ S$ and hence $T\circ S=S\circ T$, i.e.\ $T$ and $S$ permute
\end{proof}

\section{Examples}

\begin{example}
	Consider the poset $\mathbf P$ depicted in Figure~1
	
	\vspace*{-3mm}
	
	\begin{center}
		\setlength{\unitlength}{7mm}
		\begin{picture}(4,14)
			\put(2,1){\circle*{.3}}
			\put(2,3){\circle*{.3}}
			\put(2,5){\circle*{.3}}
			\put(2,7){\circle*{.3}}
			\put(1,9){\circle*{.3}}
			\put(3,9){\circle*{.3}}
			\put(1,11){\circle*{.3}}
			\put(3,11){\circle*{.3}}
			\put(2,13){\circle*{.3}}
			\put(2,7){\line(0,-1)6}
			\put(2,7){\line(-1,2)1}
			\put(2,7){\line(1,2)1}
			\put(1,9){\line(0,1)2}
			\put(1,9){\line(1,1)2}
			\put(3,9){\line(-1,1)2}
			\put(3,9){\line(0,1)2}
			\put(2,13){\line(-1,-2)1}
			\put(2,13){\line(1,-2)1}
			\put(1.85,.3){$0$}
			\put(2.4,2.85){$a$}
			\put(2.4,4.85){$b$}
			\put(2.4,6.85){$c$}
			\put(.35,8.85){$d$}
			\put(3.4,8.85){$e$}
			\put(.35,10.85){$f$}
			\put(3.4,10.85){$g$}
			\put(1.85,13.4){$1$}
			\put(1.2,-.75){{\rm Fig.~1}}
			\put(1.2,-1.75){Poset}
		\end{picture}
	\end{center}
	
	\vspace*{8mm}
	
	and put
	\begin{align*}
		T & :=\{0,a\}^2\cup\{a,b\}^2\cup\{c,e\}^2\cup\{d,g\}^2\cup\{f,1\}^2, \\
		S & :=\{0,a\}^2\cup\{a,b\}^2\cup\{c,e\}^2\cup\{d,f\}^2\cup\{g,1\}^2.
	\end{align*}
	Then $T$ and $S$ are permuting $2$-uniform tolerances on $\mathbf P$ since
	\[
	T\circ S=T\cup S\cup\{(0,b),(b,0),(d,1),(f,g),(g,f),(1,d)\}=S\circ T
	\]
	and hence they are amicable according to Theorem~\ref{th1}.
\end{example}

\begin{example}
	Consider the poset $\mathbf P$ visualized in Figure~2
	
	\vspace*{-3mm}
	
	\begin{center}
		\setlength{\unitlength}{7mm}
		\begin{picture}(4,16)
			\put(2,1){\circle*{.3}}
			\put(1,3){\circle*{.3}}
			\put(3,3){\circle*{.3}}
			\put(1,5){\circle*{.3}}
			\put(3,5){\circle*{.3}}
			\put(2,7){\circle*{.3}}
			\put(2,9){\circle*{.3}}
			\put(1,11){\circle*{.3}}
			\put(3,11){\circle*{.3}}
			\put(1,13){\circle*{.3}}
			\put(3,13){\circle*{.3}}
			\put(2,15){\circle*{.3}}
			\put(2,1){\line(-1,2)1}
			\put(2,1){\line(1,2)1}
			\put(1,3){\line(0,1)2}
			\put(1,3){\line(1,1)2}
			\put(3,3){\line(0,1)2}
			\put(2,7){\line(-1,-2)1}
			\put(2,7){\line(1,-2)1}
			\put(2,7){\line(0,1)2}
			\put(2,9){\line(-1,2)1}
			\put(2,9){\line(1,2)1}
			\put(1,11){\line(0,1)2}
			\put(1,11){\line(1,1)2}
			\put(3,11){\line(-1,1)2}
			\put(3,11){\line(0,1)2}
			\put(2,15){\line(-1,-2)1}
			\put(2,15){\line(1,-2)1}
			\put(1.85,.3){$0$}
			\put(.35,2.85){$a$}
			\put(3.4,2.85){$b$}
			\put(.35,4.85){$c$}
			\put(3.4,4.85){$d$}
			\put(2.4,6.85){$e$}
			\put(2.4,8.85){$f$}
			\put(.35,10.85){$g$}
			\put(3.4,10.85){$h$}
			\put(.35,12.85){$i$}
			\put(3.4,12.85){$j$}
			\put(1.85,15.4){$1$}
			\put(1.2,-.75){{\rm Fig.~2}}
			\put(1.2,-1.75){Poset}
		\end{picture}
	\end{center}
	
	\vspace*{8mm}
	
	and put
	\begin{align*}
T & :=\{0,a\}^2\cup\{a,c\}^2\cup\{b,d\}^2\cup\{d,e\}^2\cup\{f,h\}^2\cup\{g,j\}^2\cup\{i,1\}, \\
S & :=\{0,b\}^2\cup\{a,d\}^2\cup\{c,e\}^2\cup\{f,g\}^2\cup\{h,j\}^2\cup\{i,1\}^2.
	\end{align*}
	Then $T$ and $S$ are permuting $2$-uniform tolerances on $\mathbf P$ since
	\begin{align*}
		T\circ S & =T\cup S\cup\{(0,d),(a,b),(a,e),(b,a),(c,d),(d,0),(d,c),(e,a),(f,j),(g,h),(h,g), \\
		& \hspace*{6mm}(j,f)\}=S\circ T
	\end{align*}
	and hence they are amicable according to Theorem~\ref{th1}.
\end{example}

\begin{example}
	Consider the poset $\mathbf P$ depicted in Figure~3
	
	\vspace*{-3mm}
	
	\begin{center}
		\setlength{\unitlength}{7mm}
		\begin{picture}(4,8)
			\put(2,1){\circle*{.3}}
			\put(1,3){\circle*{.3}}
			\put(3,3){\circle*{.3}}
			\put(1,5){\circle*{.3}}
			\put(3,5){\circle*{.3}}
			\put(2,7){\circle*{.3}}
			\put(2,1){\line(-1,2)1}
			\put(2,1){\line(1,2)1}
			\put(1,3){\line(0,1)2}
			\put(1,3){\line(1,1)2}
			\put(3,3){\line(-1,1)2}
			\put(3,3){\line(0,1)2}
			\put(2,7){\line(-1,-2)1}
			\put(2,7){\line(1,-2)1}
			\put(1.85,.3){$0$}
			\put(.35,2.85){$a$}
			\put(3.4,2.85){$b$}
			\put(.35,4.85){$c$}
			\put(3.4,4.85){$d$}
			\put(1.85,7.4){$1$}
			\put(1.2,-.75){{\rm Fig.~3}}
			\put(1.2,-1.75){Poset}
		\end{picture}
	\end{center}
	
	\vspace*{8mm}
	
	and put
	\begin{align*}
		T & :=\{0,a\}^2\cup\{b,d\}^2\cup\{c,1\}^2, \\
		S & :=\{0,b\}^2\cup\{a,c\}^2\cup\{d,1\}^2.
	\end{align*}
	Then $T$ and $S$ do not permute since $(a,b)\in(T\circ S)\setminus(S\circ T)$, and $T$ and $S$ are not amicable since $0$ is a lower $T$-neighbor of $a$ being a lower $S$-neighbor of $b$, but there exists no upper $S$-neighbor of $a$ being an upper $T$-neighbor of $b$.
\end{example}

Authors' addresses:

Ivan Chajda \\
Palack\'y University Olomouc \\
Faculty of Science \\
Department of Algebra and Geometry \\
17.\ listopadu 12 \\
771 46 Olomouc \\
Czech Republic \\
ivan.chajda@upol.cz

Helmut L\"anger \\
TU Wien \\
Faculty of Mathematics and Geoinformation \\
Institute of Discrete Mathematics and Geometry \\
Wiedner Hauptstra\ss e 8--10 \\
1040 Vienna \\
Austria, and \\
Palack\'y University Olomouc \\
Faculty of Science \\
Department of Algebra and Geometry \\
17.\ listopadu 12 \\
771 46 Olomouc \\
Czech Republic \\
helmut.laenger@tuwien.ac.at


\begin{thebibliography}9
	\bibitem C
	I.~Chajda, Algebraic Theory of Tolerance Relations. Palack\'y University Olomouc, Olomouc 1991. ISBN 80-7067-042-8.
	\bibitem{CCH}
	I.~Chajda, G.~Cz\'edli and R.~Hala\v s, Independent joins of tolerance factorable varieties. Algebra Universalis {\bf69} (2013), 83--92.
	\bibitem{CL}
	I.~Chajda and H.~L\"anger, Tolerances on posets. Miskolc Math.\ Notes {\bf24} (2023), 725--736.
	\bibitem{C82}
	G.~Cz\'edli, Factor lattices by tolerances. Acta Sci.\ Math.\ (Szeged) {\bf44} (1982), 35--42.
	\bibitem{C22}
	G.~Cz\'edli, Permuting $2$-uniform tolerances on lattices. J.\ Multiple-Valued Log.\ Soft Computing {\bf39} (2022), 97--104.
\end{thebibliography}
\end{document}